\pgfplotsset{compat=1.13,
  log x ticks with fixed point/.style={
      xticklabel={
        \pgfkeys{/pgf/fpu=true}
        \pgfmathparse{exp(\tick)}%
        \pgfmathprintnumber[fixed relative, precision=3]{\pgfmathresult}
        \pgfkeys{/pgf/fpu=false}
      }
  },
  log y ticks with fixed point/.style={
      yticklabel={
        \pgfkeys{/pgf/fpu=true}
        \pgfmathparse{exp(\tick)}%
        \pgfmathprintnumber[fixed relative, precision=3]{\pgfmathresult}
        \pgfkeys{/pgf/fpu=false}
      }
  }
}
\pgfplotsset{compat=1.13,
	TC_Bri_1 plot/.style={mark=*, mark options={color=blue}},
	arith plot/.style={mark=x, mark options={color=red}},
	%TC_Bri_E plot/.style={mark=o, mark options={color=black}},
	%TC_Bri_2 plot/.style={ybar, fill=white!80!violet, mark=square*, mark options={color=orange}},
	TC_Bri_2 plot/.style={mark=square*, mark options={color=orange}},
	bar plot/.style={ybar, fill=white!80!green, mark options={color=green}},
}
\pgfplotsset{compat=1.13,
	TC_Bri_t0 plot/.style={mark=x, mark options={color=blue}},
	TC_Bri_t1 plot/.style={mark=*, mark options={color=black}},
	TC_Bri_E plot/.style={mark=o, mark options={color=red}},
	TC_Bri_t-1 plot/.style={mark=square*, mark options={color=orange}},
	TC_Bri_t-2 plot/.style={mark=diamond, mark options={color=green}},
}
\newtheorem{lemma}{Lemma}
\newtheorem{theorem}{Theorem}
\newtheorem{definition}{Definition}
\DeclarePairedDelimiter\floor{\lfloor}{\rfloor}
\newcommand{\iter}[1]{^{\left(#1\right)}}
\newcommand{\func}[2]{#1{\left(#2\right)}}
\newcommand{\pnorm}[1]{\left\lVert#1\right\rVert_p}
\newcommand{\twonorm}[1]{\left\lVert#1\right\rVert_2}
\newcommand{\infnorm}[1]{\left\lVert#1\right\rVert_\infty}
\newcommand{\abs}[1]{\left|#1\right|}
\newcommand{\natPosNum}{\mathbb{N}_{>0}}
\newcommand\yup{\ding{52}}
\newcommand\nope{\ding{56}}
\newcommand\architect{\textsc{architect}}
\newcommand{\revision}[1]{#1}
\begin{document}

	\title{Digit Stability Inference for Iterative Methods Using Redundant Number Representation}
	
	\author{
		He~Li,~\IEEEmembership{Student~Member,~IEEE},
		Ian~McInerney,~\IEEEmembership{Student~Member,~IEEE},
		James~J.~Davis,~\IEEEmembership{Member,~IEEE},
		and~George~A.~Constantinides,~\IEEEmembership{Senior~Member,~IEEE}%
		\IEEEcompsocitemizethanks{
			\IEEEcompsocthanksitem The authors are with the Department of Electrical and Electronic Engineering, Imperial College London, London, SW7 2AZ, United Kingdom.
			E-mail: \{h.li16, i.mcinerney17, james.davis, g.constantinides\}@imperial.ac.uk.
		}
	}

	\markboth{IEEE Transactions on Computers}{Li et al.: Digit Stability Inference for Iterative Methods Using Redundant Number Representation}
	
	\IEEEtitleabstractindextext{
    	\begin{abstract}

            In our recent work on iterative computation in hardware, we showed that arbitrary-precision solvers can perform more favorably than their traditional arithmetic equivalents when the latter's precisions are either under- or over-budgeted for the solution of the problem at hand.
            Significant proportions of these performance improvements stem from the ability to infer the existence of identical most-significant digits between iterations.
            This technique uses properties of algorithms operating on redundantly represented numbers to allow the generation of those digits to be skipped, increasing efficiency.
            It is unable, however, to guarantee that digits will stabilize, i.e., never change in any future iteration.
            In this article, we address this shortcoming, using interval and forward error analyses to prove that digits of high significance will become stable when computing the approximants of systems of linear equations using stationary iterative methods.
            We formalize the relationship between matrix conditioning and the rate of growth in most-significant digit stability, using this information to converge to our desired results more quickly.
            Versus our previous work, an exemplary hardware realization of this new technique achieves an up-to 2.2$\times$ speedup in the solution of a set of variously conditioned systems using the Jacobi method.
    		
        \end{abstract}
        
		\begin{IEEEkeywords}
		
			Digit stability, stationary iterative methods, redundant number representation, arbitrary-precision computation.
			
		\end{IEEEkeywords}
    }
    
    \maketitle
    
    \section{Introduction \& Motivation}
        
        \revision{Many scientific, optimization, and machine learning applications require the solution of systems of linear equations~\cite{Olshanskii2014IterativeMethods}}.
        Stationary iterative methods such as Gauss-Seidel, Jacobi, and successive over-relaxation are popular ways to convert such an $N$-dimensional system, $\boldsymbol{A}\boldsymbol{x} = \boldsymbol{b}$, into a linear fixed-point iteration.
        These all take the form $\boldsymbol{x}\iter{k+1} = \func{f}{\boldsymbol{x}\iter{k}}$, where $f : \mathbb{R}^N \to \mathbb{R}^N$ is a computable real function.
        
        When solving such systems conventionally, cases arise where low-magnitude perturbations cause large numbers of digits to change between iterations via carry propagation.
        Fig.~\ref{fig:digit_gen_nonredundant} exemplifies this for the toy iteration
        \begin{equation*}
        	x\iter{k + 1} = \nicefrac{5}{4} - \nicefrac{1}{4} \cdot x\iter{k}
        \end{equation*}
        computed from $x\iter{0} = 0$ with nonredundant radix-10 number representation.
        Here, the method causes oscillations in approximants around the true result, $x^* = 1$.
        Although the absolute algorithm residue $\abs{x\iter{k} - x^*}$ decreases monotonically as $k \to \infty$, digits across approximants never stabilize.
        
        \begin{figure}
        	\centering
        	\begin{subfigure}[t]{0.1\linewidth}
        		\centering
        		\begin{tikzpicture}[yscale=-1,inner sep=0.5mm,scale=0.6]
		
	\foreach \y in {1,...,6}
		\node at (-1.5,\y) {\vphantom{$1$}\smash{$x\iter{\y}{:}$}};
	
\end{tikzpicture}
        	\end{subfigure}
        	\begin{subfigure}[t]{0.02\linewidth}
        	    \hfill
        	\end{subfigure}
        	\begin{subfigure}[t]{0.4\linewidth}
        		\centering
        		\begin{tikzpicture}[yscale=-0.6,inner sep=0.5mm, xscale=0.5]
	
	\newcommand\g{\color{black!25}}

	\foreach \y in {0,...,5}
		\node(p\y) at (0.5,\y) {\vphantom{$1$}$.$};
	%\foreach \y in {1,...,7}
	%	\node(p\y) at (0.5,\y) {\vphantom{$1$}\g$.$};

	\node(x01) at (0,0) {$1$};
	\node(x11) at (1,0) {$2$};
	\node(x21) at (2,0) {$5$};
	\node(x31) at (3,0) {$0$};
	\node(x41) at (4,0) {$0$};
	\node(x51) at (5,0) {$0$};
	\node(x61) at (6,0) {$0$};
%	\node(x71) at (7,0) {$0$};
%	\node(x81) at (8,0) {$0$};

	\node(x02) at (0,1) {$1$};
	\node(x12) at (1,1) {$0$};
	\node(x22) at (2,1) {$1$};
	\node(x32) at (3,1) {$5$};
	\node(x42) at (4,1) {$6$};
	\node(x52) at (5,1) {$2$};
	\node(x62) at (6,1) {$5$};
%	\node(x72) at (7,1) {$0$};
%	\node(x82) at (8,1) {$0$};

	\node(x03) at (0,2) {$0$};
	\node(x13) at (1,2) {$9$};
	\node(x23) at (2,2) {$9$};
	\node(x33) at (3,2) {$6$};
	\node(x43) at (4,2) {$0$};
	\node(x53) at (5,2) {$9$};
	\node(x63) at (6,2) {$3$};
%	\node(x73) at (7,2) {$0$};
%	\node(x83) at (8,2) {$0$};

	\node(x04) at (0,3) {$1$};
	\node(x14) at (1,3) {$0$};
	\node(x24) at (2,3) {$0$};
	\node(x34) at (3,3) {$0$};
	\node(x44) at (4,3) {$9$};
	\node(x54) at (5,3) {$7$};
	\node(x64) at (6,3) {$6$};
%	\node(x74) at (7,3) {$0$};
%	\node(x84) at (8,3) {$0$};

	\node(x05) at (0,4) {$0$};
	\node(x15) at (1,4) {$9$};
	\node(x25) at (2,4) {$9$};
	\node(x35) at (3,4) {$9$};
	\node(x45) at (4,4) {$7$};
	\node(x55) at (5,4) {$5$};
	\node(x65) at (6,4) {$5$};
%	\node(x75) at (7,4) {$0$};
%	\node(x85) at (8,4) {$0$};

	\node(x06) at (0,5) {$1$};
	\node(x16) at (1,5) {$0$};
	\node(x26) at (2,5) {$0$};
	\node(x36) at (3,5) {$0$};
	\node(x46) at (4,5) {$0$};
	\node(x56) at (5,5) {$6$};
	\node(x66) at (6,5) {$1$};

\end{tikzpicture}
        		\caption{Nonredundant form.}
        		\label{fig:digit_gen_nonredundant}
        	\end{subfigure}
        	\begin{subfigure}[t]{0.02\linewidth}
        	    \hfill
        	\end{subfigure}
        	\begin{subfigure}[t]{0.4\linewidth}
        		\centering
        		\begin{tikzpicture}[yscale=-0.6,inner sep=0.5mm,xscale=0.5]
	
	\newcommand\g{\color{black!25}}

	\foreach \y in {0,...,0}
		\node(p\y) at (0.5,\y) {\vphantom{$1$}$.$};
	\foreach \y in {1,...,5}
		\node(p\y) at (0.5,\y) {\vphantom{$1$}\g$.$};

	\node(x01) at (0,0) {$1$};
	\node(x11) at (1,0) {$2$};
	\node(x21) at (2,0) {$5$};
	\node(x31) at (3,0) {$0$};
	\node(x41) at (4,0) {$0$};
	\node(x51) at (5,0) {$0$};
	\node(x61) at (6,0) {$0$};
%	\node(x71) at (7,0) {$0$};
%	\node(x81) at (8,0) {$0$};

	\node(x02) at (0,1) {\g$1$};
	\node(x12) at (1,1) {$0$};
	\node(x22) at (2,1) {$1$};
	\node(x32) at (3,1) {$5$};
	\node(x42) at (4,1) {$6$};
	\node(x52) at (5,1) {$2$};
	\node(x62) at (6,1) {$5$};
%	\node(x72) at (7,1) {$0$};
%	\node(x82) at (8,1) {$0$};

	\node(x03) at (0,2) {\g$1$};
	\node(x13) at (1,2) {\g$0$};
	\node(x23) at (2,2) {$0$};
	\node(x33) at (3,2) {$\overline{4}$};
	\node(x43) at (4,2) {$1$};
	\node(x53) at (5,2) {$\overline{1}$};
	\node(x63) at (6,2) {$3$};
%	\node(x73) at (7,2) {$0$};
%	\node(x83) at (8,2) {$0$};

	\node(x04) at (0,3) {\g$1$};
	\node(x14) at (1,3) {\g$0$};
	\node(x24) at (2,3) {\g$0$};
	\node(x34) at (3,3) {$1$};
	\node(x44) at (4,3) {$0$};
	\node(x54) at (5,3) {$\overline{3}$};
	\node(x64) at (6,3) {$6$};
%	\node(x74) at (7,3) {$0$};
%	\node(x84) at (8,3) {$0$};

	\node(x05) at (0,4) {\g$1$};
	\node(x15) at (1,4) {\g$0$};
	\node(x25) at (2,4) {\g$0$};
	\node(x35) at (3,4) {\g$0$};
	\node(x45) at (4,4) {$\overline{3}$};
	\node(x55) at (5,4) {$5$};
	\node(x65) at (6,4) {$5$};
%	\node(x75) at (7,4) {$0$};
%	\node(x85) at (8,4) {$0$};

	\node(x06) at (0,5) {\g$1$};
	\node(x16) at (1,5) {\g$0$};
	\node(x26) at (2,5) {\g$0$};
	\node(x36) at (3,5) {\g$0$};
	\node(x46) at (4,5) {\g$0$};
	\node(x56) at (5,5) {$6$};
	\node(x66) at (6,5) {$1$};

\end{tikzpicture}
        		\caption{Redundant form.}
        		\label{fig:digit_gen_redundant}
        	\end{subfigure}
        	\caption{
        		Approximants of toy iteration $x\iter{k+1} = \nicefrac{5}{4} - \nicefrac{1}{4}\cdot x\iter{k}$ with both nonredundant and redundant radix-10 number representation.
        		In (\subref{fig:digit_gen_redundant}), digits in gray are known to have stabilized, and thus do not need to be recomputed.
        		Digits with over-bars represent negative values: $\overline{i} = -i$.
        	}
        	\label{fig:digit_gen}
        \end{figure}
        
        By introducing redundancy into our number representation, we can prevent the occurrence of this scenario.
        \revision{Fig.~\ref{fig:digit_gen_redundant} shows the same example, also radix-10, but now with digits able to be selected from the \emph{redundant digit set} $\left\{-9,-8,\cdots,8,9\right\}$.
        Here, less-significant digits (LSDs) can be used to correct errors that were previously introduced by digits of higher significance, further allowing those most-significant digits (MSDs) to be declared stable.}
        The computation of these digits---shown in gray---can thus be avoided, increasing computational efficiency.
        
        In our previous work, \architect{}, we introduced the first hardware architecture capable of computing solutions of systems of linear equations to arbitrary accuracy~\cite{ARCHITECT,li2018digit,ARCHITECT_TVLSI}.
        \architect{} uses redundant number representation to allow approximants to be computed from MSD first with \emph{online arithmetic}, enabling earlier approximants to be refined as needed.
        With the knowledge that some $D$ MSDs are common to approximants $k$ and $k + 1$, \architect{} is able to deduce the number that will also appear in approximant $k + 2$.
        Since that number is always smaller than $D$, however, this technique is unfortunately unable to infer digit stability.
        
        Also using online arithmetic, Ercegovac's E-method produces the digits of its results from MSD first, one more per iteration~\cite{ercegovac1977general}.
        As exemplified in Fig.~\ref{plt:intro}, this technique therefore enables the inference of digit stability.
        The E-method, however, is a specialized Jacobi iteration and imposes strict conditions on its inputs: particularly a well conditioned $\boldsymbol{A}$.
        
        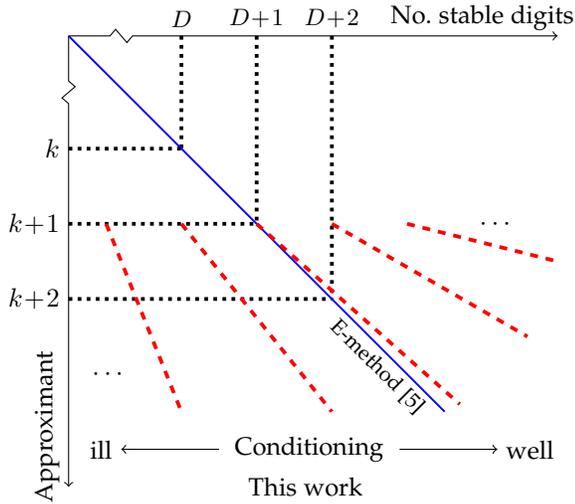
\begin{figure}
			\centering
			\begin{tikzpicture}[yscale=-1.0, xscale=1.0]

\def\xz{-0.3}
\def\xa{1.0}
\def\xn{3.3}
\def\xb{4.3}
\def\barh{-2.5 mm}
\def\bari{15 mm}
\def\barb{19 mm}
\def\barj{22 mm}
\def\barc{25 mm}
\tikzset{  
	brace/.style={decorate, decoration={brace,amplitude=1mm}},
}

\draw[->] (-0.5,-0.5) -- (0.05,-0.5) decorate[decoration=zigzag,segment length=3mm]{--(0.45,-0.5)} -- (6.0,-0.5);
\node(D1)[anchor=south] at (1,-0.5) {\small{$D$}};
\node(D1)[anchor=south] at (2,-0.5) {\small{$D\!+\!1$}};
\node(D1)[anchor=south] at (3,-0.5) {\small{$D\!+\!2$}};
\draw[->] (-0.5, -0.5) -- (-0.5,0.1) decorate[decoration=zigzag,segment length=3mm]{--(-0.5,0.5)} -- (-0.5, 1.0) node[left, rotate=0] {$k$} -- (-0.5, 2.0) node[left, rotate=0] {$k\!+\!1$} -- (-0.5, 3.0) node[left, rotate=0] {$k\!+\!2$} -- (-0.5, 4.7) node[above, rotate=90] {Approximant} -- (-0.5, 5.5);
\node[anchor=south] at (5.0,-0.45) {No. stable digits};

\newcommand\braceheight{-0.6}
%\node(Dm1)[rotate=0] at (-0.8,1.0) {\small{$k$}};
%\node(Dm2)[rotate=0] at (-1.0,2.0) {\small{$k\!+\!1$}};
%\node(Dm3)[rotate=0] at (-1.0,3.0) {\small{$k\!+\!2$}};

\draw[-, black,  dotted, line width=0.50mm] (2.0, -0.5) to (2.0,2.0);
\draw[-, black,  dotted, line width=0.50mm] (-0.5, 2.0) to (2.0,2.0);
\draw[-, black,  dotted, line width=0.50mm] (1.0, -0.5) to (1.0,1.0);
\draw[-, black,  dotted, line width=0.50mm] (-0.5, 1.0) to (1.0,1.0);
\draw[-, black,  dotted, line width=0.50mm] (3.0, -0.5) to (3.0,3.0);
\draw[-, black,  dotted, line width=0.50mm] (-0.5, 3.0) to (3.0,3.0);

\draw[-, blue, line width=0.25mm] (-0.5, -0.5) to (4.5,4.5);

\draw[-, red, dashed, line width=0.50mm] (2.0, 2.0) to (4.7,4.4);
%\draw[-, red, dashed, line width=0.50mm] (2.5, 2.0) to (5.5,3.3);
\draw[-, red, dashed, line width=0.50mm] (3.0, 2.0) to (5.6,3.5);
\node[black, inner sep=0, rotate = 0] (5.2, 2.0) at (5.2,2.0) {$\cdots$};
\draw[-, red, dashed, line width=0.50mm] (4.0, 2.0) to (6.0,2.5);

%\draw[-, red, dashed, line width=0.50mm] (1.5, 2.0) to (4.7, 5.2);
\draw[-, red, dashed, line width=0.50mm] (1.0, 2.0) to (3.0, 4.5);
\draw[-, red, dashed, line width=0.50mm] (0.0, 2.0) to (1.0,4.5);
\node[black, inner sep=0] (0.05,4.0) at (0.05,4.0) {$\cdots$};

\draw[->] (3.9, 5.0) -- (5.2, 5.0) node[right] {well};
\node(Emethod) at (2.7, 5.0) {Conditioning};
\draw[->] (1.5, 5.0) -- (0.2, 5.0) node[left] {ill};

\node(Emethod)[rotate=-45] at (3.6, 3.9) {\footnotesize{E-method~\cite{ercegovac1977general}}};
\node(Ours) at (2.65, 5.5) {This work};

\end{tikzpicture}
			\caption{
			    A sketch of guaranteed digit stability.
			    The E-method produces one new digit of lower significance per iteration; these, whose boundary is represented by the solid blue line, therefore remain stable across all future approximants.
			    With knowledge that approximants $k$ and $k + 1$ share $D$ identical MSDs, our technique is able to infer the numbers of stable digits within the $k + 1$th and all future approximants.
			    As shown by the dashed red lines, these are dependent upon the conditioning of $\boldsymbol{A}$.
		    }
			\label{plt:intro}
    	\end{figure}
        
        In this article, we revisit \architect{}'s MSD elision, combining knowledge of MSDs shared between successive approximants with matrix conditioning to infer digit stability.
        In contrast to the E-method, this work is applicable to any stationary iterative method and, as also shown in Fig.~\ref{plt:intro}, holds for both well and ill-conditioned $\boldsymbol{A}$.
        With particularly well conditioned matrices, we can predict the generation of more than one stable digit per iteration.
        
        We make the following novel contributions in this article:
        \begin{itemize}
            \item
                Using interval and forward error analyses, a theorem for the rate of stable MSD growth within the approximants produced by any stationary iterative method.
            \item
                Theoretical comparison of our proposal versus existing methods allowing the skipping of MSD calculation.
            \item
                An exemplary hardware implementation of our proposal using the Jacobi method.
            \item
                Empirical performance comparisons against the state-of-the-art arbitrary-precision iterative solver.
                For the solution of a set of representative linear equations, we achieve speedups of 2.0--2.2$\times$ over this prior work.
        \end{itemize}
		
	\section{Background}

		\subsection{\revision{Redundant Number Representation}}
		\revision{In a redundant number system, the representation associated with a value is not unique, i.e., the same value can be encoded in more than one way.
	    The most widely used of the redundant systems is the class of \emph{symmetric signed-digit} number representations~\cite{lu2004arithmetic}, originally conceived for the purpose of performing carry-free addition~\cite{avizienis1961signed}.
        Here, digits can take any value from within the set
        \begin{equation*}
        	S \coloneqq \left\{-\gamma,-\gamma+1,\cdots,\gamma-1,\gamma\right\},
        \end{equation*}
        where $\nicefrac{r}{2} \leq \gamma \leq r-1$ for some radix $r$.
        When $\gamma = r-1$, the digit set $S$ is said to be \emph{maximally redundant}.
        Henceforth, we assume the use of a symmetric, maximally redundant digit set, and our example implementation uses radix-2 number representation of this form.
        Our results could be extended to asymmetric ($\min{S} \neq -\max{S}$) and non-maximally redundant digit sets if required.}
		
		\subsection{Hardware Applications of Redundancy}

            \revision{The performance of many custom hardware systems is predominantly dependent upon the speed of their underlying arithmetic operators~\cite{wang2019deep}.}
            When these employ conventional, nonredundant number representations, carry propagation is often the primary factor determining their latency.
            The introduction of redundancy, however, often allows execution times to be shortened due to the reduction---and sometimes complete elimination---of carry chains~\cite{jaberipur2017redundant}.
            Interest in the acceleration of arithmetic circuits using redundant number systems is growing.
            Signed-digit representations have been used within high-radix adders~\cite{timarchi2015maximally} and dividers~\cite{amanollahi2016energy}, and constant-vector multipliers~\cite{fan2016improved}, to improve performance and reduce power consumption versus their conventional equivalents.
            Fast multipliers using signed-digit representation during their partial product generation~\cite{cui2015modified} and reduction~\cite{kaivani2015floating} steps have also been proposed.
            
            Similarly to the E-method~\cite{ercegovac1977general}, the work we describe herein uses redundancy in order to infer digit stability within iterative algorithms.
            In contrast to that technique, however, ours is less restrictive and more widely applicable.
		
		\subsection{Online Arithmetic Essentials}
        \label{sec:oa}
	        
	        The \emph{de facto} standard for MSD-first calculation is Ercegovac's \emph{online arithmetic}~\cite{OAbook}.
	        An important characteristic of online operators is that of \emph{online delay}, typically denoted $\delta$.
	        Classical digit-serial online operators produce output digits at the same rate as they consume them, but delayed by a fixed number of digits: $\delta$.
	        When operators are chained to form a datapath, its overall online delay is the summation of operators' delays through the longest path~\cite{fpt16}.
	        
	        Online delay is typically considered to be a limitation in terms of throughput, thus effort has been made to reduce it through the use of composite online functions~\cite{adharapurapu2004composite,ercegovac2007digit}, multioperand operators~\cite{joseph2018algorithms,villalba2011radix}, and high radices~\cite{joseph2016design}.
	        For \architect{}, we showed that online delay could also be used to infer the presence of identical MSDs within iterative computations~\cite{li2018digit,ARCHITECT_TVLSI}.
			Since datapaths composed of online operators compute from MSD first and outputs begin to be generated $\delta$ digits after input digits are consumed, an output's first $D$ digits are wholly dependent upon its inputs' first $D+\delta$ digits.
			Since iterative methods' inputs are its previously generated outputs, this allows us to guarantee that, if approximants $k$ and $k+1$ are equal in their first $D$ MSDs, approximant $k+2$ will have $D-\delta$ MSDs in common with both when computed using online arithmetic.
			In this article, we prove that MSDs can be declared identical not just for \emph{one} approximant, but across \emph{all} future approximants.
            
        \subsection{Stationary Iterative Methods}
            
            In numerical linear algebra, a straightforward way to solve a system $\boldsymbol{A}\boldsymbol{x} = \boldsymbol{b}$ is to transform it into a linear fixed-point iteration of the form
            \begin{equation}
        	    \boldsymbol{M}\boldsymbol{x}\iter{k+1} = \boldsymbol{N}\boldsymbol{x}\iter{k} + \boldsymbol{b}
        	    \label{eqn:iter_exact}
        	\end{equation}
        	with $\boldsymbol{A} = \boldsymbol{M} - \boldsymbol{N}$ and $\boldsymbol{M}$ non-singular~\cite{higham2002accuracy}%
        	\footnote{\revision{
        	    If desired, explicit preconditioning can be applied by substituting a preconditioning matrix $\boldsymbol{R}$ for $\boldsymbol{M}$, resulting in an alternative stationary iterative method~\cite{evans1992modified}.
        	    Implicit preconditioning is beyond the scope of this article, but could be incorporated if it can be expressed in terms of standard online arithmetic operators~\cite{ercegovac1977general}.
            }}.
        	Defining iteration matrix $\boldsymbol{G} = \boldsymbol{M}^{-1}\boldsymbol{N}$, \eqref{eqn:iter_exact} can also be written as
        	\begin{equation}
        	    \boldsymbol{x}\iter{k+1} = \boldsymbol{G}\boldsymbol{x}\iter{k} + \boldsymbol{M}^{-1}\boldsymbol{b}.
        	    \label{eqn:iter_exact_g}
        	\end{equation}
        	\revision{Achievement of convergence requires that $\boldsymbol{G}$'s spectral radius $\rho(\boldsymbol{G}) < 1$.
        	Such \emph{stationary iterative methods} are widely used in the approximate solution of nonlinear~\cite{macklin2019non}, differential~\cite{koh2019pricing}, and integral equations~\cite{yuan2019overview}.}
        	They also play a significant role in multigrid theory; multigrid methods commonly serve as preconditioners for many other iterative algorithms~\cite{barrett1994templates}.
            \revision{For scenarios in which high-precision results are required, mixed-precision methods enabling performant and efficient implementation have been proposed~\cite{buttari2008using}.
            In contrast to standard approaches, we adopt an MSD-first arbitrary-precision computation paradigm enabling iterative refinement limited only by memory capacity~\cite{ARCHITECT}.}
            
            The work we present in this article applies to any method of the form in \eqref{eqn:iter_exact}.
            While we use stationary iterative methods as accessible examples for our analysis, our proposal could be extended to nonlinear fixed-point iterations.

	\section{Preliminaries \& Notation}
        
        For the remainder of this article, we assume the use of a fixed-point radix-$r$ symmetric signed-digit number representation system with maximal redundancy.
        
        A scalar is denoted by a normal symbol $x$.
        For convenience, we assume that all redundantly represented numbers have $\abs{x} < 1$ and can be expressed as $x = \sum_{i = 1}^D x_i r^{-i}$, where $x_i$ is the $i$th MSD of $D$-digit $x$.
        
        A vector is represented by a bold symbol $\boldsymbol{x}$, with its $j$th element denoted $x_j$.
        Where a vector is composed of signed-digit numbers, $x_{ji}$ is the $i$th MSD of the $j$th element of $\boldsymbol{x}$.
        
        An approximant of an iterative method at iteration $k \in \natPosNum$ is denoted $\boldsymbol{x}\iter{k}$, while its exact result is $\boldsymbol{x}^*$.
        The residue of an iterative method at iteration $k$ is $\boldsymbol{s}\iter{k} = \boldsymbol{x}\iter{k} - \boldsymbol{x}^*$.
        
        A matrix is represented by a bold capital symbol $\boldsymbol{X}$, and the $p$-norm of either a matrix or a vector is given by $\pnorm{\bullet}$.
	
	\section{Digit Stability Inference}
	\label{sec:theory}
        
        Assume that a stationary iterative method is used to solve a linear system $\boldsymbol{A}\boldsymbol{x} = \boldsymbol{b}$.
        \revision{Further assume that the inequality $\infnorm{\boldsymbol{G}} < 1$ holds}%
        \footnote{\revision{
            We adopt the infinity-norm in the analysis that follows since digit stability is ensured through bounds on worst-case perturbations of $\boldsymbol{x}\iter{k}$.
            In the common case of Hermitian matrices, $\func{\rho}{\boldsymbol{G}} = \twonorm{\boldsymbol{G}}$ and $\twonorm{\boldsymbol{G}} \leq \infnorm{\boldsymbol{G}}$, thus a bound on $\infnorm{\boldsymbol{G}}$ corresponds to a bound on $\func{\rho}{\boldsymbol{G}}$~\cite{Horn}.
            Finally note that, although we present our analysis in a general setting, its application is intended for methods where $\infnorm{\boldsymbol{G}}$ is readily computable, such as Jacobi.
        }}.
        If approximants to $\boldsymbol{x}^*$ are vectors with digits selected from a symmetric maximally redundant signed-digit set, knowledge of the number of identical MSDs in any two successive approximants $\hat{k} - 1$ and $\hat{k}$ allows us to declare that subsets of MSDs in all approximants $k \geq \hat{k}$ will never change.
		The key steps in the derivation that follows are:
		\begin{enumerate}
		    \item
		        \textbf{Lemma~\ref{lem:ResidueBound}}: If it is known that $D$ MSDs of successive approximants' elements are identical, we can bound the magnitude of the algorithm residue based on $D$ and $\boldsymbol{G}$.
		    \item
		        \textbf{Lemma~\ref{lem:ConstantExistence}}: Given a particular residue bound, we prove that a quantity of the current and future approximants' MSDs can never change.
		    \item
		        \textbf{Theorem~\ref{thm:StableDigits}}: Bringing Lemmas~\ref{lem:ResidueBound} and \ref{lem:ConstantExistence} together, we infer the minimum number of permanently identical MSDs per approximant based on $D$ and $\boldsymbol{G}$.
		\end{enumerate}
    		
    	Let us begin by formally defining the meaning of digit stability within the approximants of an iterative algorithm.
    	
        \begin{definition}[Digit stability]
    	\label{def:StableDigits}
    	    
    	    \revision{The $D$ MSDs of an approximant $\hat{k}$ are said to be stable iff}
    	    \begin{equation*}
    	        {x}\iter{k}_i = {x}\iter{\hat{k}}_i \quad \forall k > \hat{k}~\forall i \in \left\{1,2,\cdots,D\right\}.
    	    \end{equation*}
    	    
    	\end{definition}
        
        \revision{Our choice of number system means that we can append digits to a number $x$ to form a new number, $\tilde{x}$, representing any value within a symmetric interval around $x$.}
        We call such numbers \emph{consistent} in the values they represent.
        
        \begin{definition}[Digit consistency]
    	\label{def:ConsistentDigits}
    	    
            Let $x$ be a number composed of $D$ digits selected from a symmetric maximally redundant signed-digit set.
    	    Further let $y$ be a second number, similarly constructed, comprising any finite number of digits.
    	    $y$ is said to be consistent with $x$ iff 
    	    \begin{equation*}
    	        y \in \left(x - r^{-D},~x + r^{-D}\right).
    	    \end{equation*}
    	\end{definition}
        
        \begin{lemma}[Representation interval]
        \label{lem:redundantInterval}
    		
    		Let $x$ be a $D$-digit number.
    		\revision{If additional digits are appended to $x$ to form a new number, $\tilde{x}$, then $\tilde{x}$ is consistent with $x$.}
    		
    	\end{lemma}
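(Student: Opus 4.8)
The plan is to compute the difference $\tilde{x} - x$ explicitly and bound its magnitude by a finite geometric series. Writing $x = \sum_{i=1}^{D} x_i r^{-i}$ and letting the appended digits be $\tilde{x}_{D+1}, \tilde{x}_{D+2}, \ldots, \tilde{x}_{D+m}$ for some finite $m \geq 1$, the shared leading $D$ digits cancel, leaving
\begin{equation*}
    \tilde{x} - x = \sum_{i=D+1}^{D+m} \tilde{x}_i r^{-i}.
\end{equation*}
The whole argument then reduces to showing this tail is strictly smaller in magnitude than $r^{-D}$.

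Next I would invoke maximal redundancy, under which every digit obeys $\abs{\tilde{x}_i} \leq r - 1$. The triangle inequality then gives
\begin{equation*}
    \abs{\tilde{x} - x} \leq (r-1) \sum_{i=D+1}^{D+m} r^{-i},
\end{equation*}
and evaluating the finite geometric sum as $\sum_{i=D+1}^{D+m} r^{-i} = r^{-D}\left(1 - r^{-m}\right)/(r-1)$ collapses the bound to $\abs{\tilde{x} - x} \leq r^{-D}\left(1 - r^{-m}\right)$. Because at least one digit is appended, $m \geq 1$ forces $1 - r^{-m} < 1$, yielding the strict inequality $\abs{\tilde{x} - x} < r^{-D}$. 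This is precisely the statement $\tilde{x} \in \left(x - r^{-D},~x + r^{-D}\right)$, i.e., that $\tilde{x}$ is consistent with $x$ in the sense of Definition~\ref{def:ConsistentDigits}.

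The computation itself is routine; the single point that demands care is the \emph{strictness} of the final inequality, since Definition~\ref{def:ConsistentDigits} requires membership of an \emph{open} interval. The hard part is therefore confirming that the geometric tail never saturates its supremum of $r^{-D}$ — which holds exactly because only finitely many digits are appended, making the factor $1 - r^{-m}$ bounded away from $1$. I would remark in passing that, even if infinitely many maximal-magnitude digits were permitted, the sum would approach $r^{-D}$ only in the limit and never attain it, so the open-interval conclusion would survive; the finite-digit hypothesis simply renders the strict bound immediate.
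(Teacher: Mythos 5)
Your proof is correct and follows essentially the same route as the paper's: decompose $\tilde{x} - x$ into the appended-digit tail, bound each digit magnitude by $r-1$, evaluate the finite geometric series to get $r^{-D}\left(1 - r^{-m}\right)$ (the paper's $r^{-D} - r^{-\tilde{D}}$), and conclude strict containment in the open interval of Definition~\ref{def:ConsistentDigits}. One caution on your closing aside, which is actually false: if \emph{infinitely} many digits equal to $r-1$ were appended, the series would sum to \emph{exactly} $r^{-D}$ (an infinite series equals its limit, just as $0.999\ldots = 1$), placing $\tilde{x}$ on the boundary and outside the open interval — this is precisely why Definition~\ref{def:ConsistentDigits} and the lemma restrict to finitely many digits, so the finiteness hypothesis is essential rather than merely convenient.
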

    	
    	\begin{proof}
    	    
    	    By definition,
    	    \begin{equation*}
    	        x = \sum_{i=1}^{D} x_i r^{-i}.
    	    \end{equation*}
    	   \revision{Since $\tilde{x}$ contains $\tilde{D} > D$ digits, with its $D$ MSDs the same as those in $x$,
    	    \begin{align*}
    	        \tilde{x} &= \sum_{i=1}^{D} x_i r^{-i} + \sum_{i=D+1}^{\tilde{D}} \tilde{x_i} r^{-i}
    	        \\
    	                &= x + \sum_{i=D+1}^{\tilde{D}} \tilde{x}_i r^{-i}.
    	    \end{align*}}

    	    The digit extrema in our number system are $-\left(r - 1\right)$ and $r - 1$.
    	    We can thus deduce that
    	    \revision{\begin{align*}
    	        \tilde{x} &\in \left[x - \sum_{i=D+1}^{\tilde{D}} \left(r-1\right)r^{-i},~x + \sum_{i=D+1}^{\tilde{D}} \left(r-1\right)r^{-i} \right]
    		    \\
    		            &= \left[x - r^{-D} + r^{-\tilde{D}},~x + r^{-D} - r^{-\tilde{D}}\right]
    		    \\
    		            &\subset \left(x - r^{-D},~x + r^{-D}\right),
    		\end{align*}
    		and so, per Definition~\ref{def:ConsistentDigits}, $\tilde{x}$ is consistent with $x$.}
    	\end{proof}
    	
    	Suppose now that we know---via runtime digit-by-digit comparison---that some $D$ MSDs within successive approximants $k$ and $k + 1$ are identical. Given particular iteration matrix conditioning, we can bound the algorithm residue for approximant $k + 1$.
        
        \begin{lemma}[Residue bound]
        \label{lem:ResidueBound}

    	    If the elements of $\boldsymbol{x}\iter{k}$ and $\boldsymbol{x}\iter{k + 1}$ share a minimum of $D$ identical MSDs, then
            \begin{equation*}
    	        \infnorm{\boldsymbol{s}\iter{k + 1}} < \frac{2\infnorm{\boldsymbol{G}}}{1 - \infnorm{\boldsymbol{G}}} r^{-D}.
            \end{equation*}
        
        \end{lemma}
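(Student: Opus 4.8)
The plan is to relate the digit-sharing hypothesis to a bound on the gap between consecutive approximants and then propagate that gap to the residue using the fixed-point structure of the iteration. First I would record the residue recurrence: since the exact solution satisfies $\boldsymbol{x}^* = \boldsymbol{G}\boldsymbol{x}^* + \boldsymbol{M}^{-1}\boldsymbol{b}$, subtracting this from \eqref{eqn:iter_exact_g} gives $\boldsymbol{s}\iter{k+1} = \boldsymbol{G}\boldsymbol{s}\iter{k}$. This identity is the mechanism that will eventually convert a bound on $\infnorm{\boldsymbol{s}\iter{k}}$ into the desired bound on $\infnorm{\boldsymbol{s}\iter{k+1}}$, while introducing the extra factor of $\infnorm{\boldsymbol{G}}$ that appears in the numerator of the claim.

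Second, I would translate the hypothesis into an interval statement. If the $j$th elements of $\boldsymbol{x}\iter{k}$ and $\boldsymbol{x}\iter{k+1}$ agree in their first $D$ MSDs, then each is obtained by appending further digits to their common $D$-digit prefix, so Lemma~\ref{lem:redundantInterval} places both within the open interval of radius $r^{-D}$ about that prefix. Hence $\abs{x_{j}\iter{k} - x_{j}\iter{k+1}} < 2r^{-D}$ for every $j$, and taking the maximum over $j$ yields $\infnorm{\boldsymbol{x}\iter{k} - \boldsymbol{x}\iter{k+1}} < 2r^{-D}$. Note that this is a bound on the distance between \emph{successive approximants}, not directly on the residue.

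The key algebraic step bridges these two quantities. Writing $\boldsymbol{x}\iter{k} - \boldsymbol{x}\iter{k+1} = \boldsymbol{s}\iter{k} - \boldsymbol{s}\iter{k+1}$ and substituting the recurrence gives $\boldsymbol{x}\iter{k} - \boldsymbol{x}\iter{k+1} = (\boldsymbol{I} - \boldsymbol{G})\boldsymbol{s}\iter{k}$. Because $\infnorm{\boldsymbol{G}} < 1$, the matrix $\boldsymbol{I} - \boldsymbol{G}$ is invertible and the Neumann series gives $\infnorm{(\boldsymbol{I} - \boldsymbol{G})^{-1}} \leq \left(1 - \infnorm{\boldsymbol{G}}\right)^{-1}$. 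I would therefore bound $\infnorm{\boldsymbol{s}\iter{k}} \leq \infnorm{(\boldsymbol{I} - \boldsymbol{G})^{-1}}\,\infnorm{\boldsymbol{x}\iter{k} - \boldsymbol{x}\iter{k+1}} < 2r^{-D}/\left(1 - \infnorm{\boldsymbol{G}}\right)$, and then apply the recurrence once more via submultiplicativity: $\infnorm{\boldsymbol{s}\iter{k+1}} = \infnorm{\boldsymbol{G}\boldsymbol{s}\iter{k}} \leq \infnorm{\boldsymbol{G}}\,\infnorm{\boldsymbol{s}\iter{k}}$, which assembles into the stated factor $2\infnorm{\boldsymbol{G}}/\left(1 - \infnorm{\boldsymbol{G}}\right)$. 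The strictness of the final inequality is inherited from the open representation intervals.

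I expect the main obstacle to be the bridging step rather than any individual estimate: the digit comparison supplies information about the consecutive-approximant gap, whereas the quantity of interest is the approximant-to-solution residue, and the two are linked only through the factor $(\boldsymbol{I} - \boldsymbol{G})^{-1}$. Recognizing that $\infnorm{\boldsymbol{G}} < 1$ is precisely what both guarantees invertibility and controls the norm of this inverse is the crux; the remaining manipulations are routine applications of norm submultiplicativity.
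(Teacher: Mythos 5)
Your proof is correct and takes essentially the same route as the paper's: the digit-sharing hypothesis is converted via Lemma~\ref{lem:redundantInterval} into $\infnorm{\boldsymbol{x}\iter{k} - \boldsymbol{x}\iter{k+1}} < 2r^{-D}$, the residue recurrence $\boldsymbol{s}\iter{k+1} = \boldsymbol{G}\boldsymbol{s}\iter{k}$ supplies the numerator factor $\infnorm{\boldsymbol{G}}$, and a Neumann/geometric-series bound supplies the factor $\left(1 - \infnorm{\boldsymbol{G}}\right)^{-1}$. The only difference is presentational: you derive $\boldsymbol{s}\iter{k} = \left(\boldsymbol{I} - \boldsymbol{G}\right)^{-1}\left(\boldsymbol{x}\iter{k} - \boldsymbol{x}\iter{k+1}\right)$ directly from the recurrence, whereas the paper reaches the identical bound through $\boldsymbol{A}\boldsymbol{s}\iter{k} = \boldsymbol{M}\left(\boldsymbol{x}\iter{k} - \boldsymbol{x}\iter{k+1}\right)$ and the cited identity $\boldsymbol{A}^{-1} = \sum_{i=0}^{\infty}\boldsymbol{G}^i\boldsymbol{M}^{-1}$, which is the same computation since $\boldsymbol{A} = \boldsymbol{M}\left(\boldsymbol{I} - \boldsymbol{G}\right)$ and $\sum_{i=0}^{\infty}\boldsymbol{G}^i = \left(\boldsymbol{I} - \boldsymbol{G}\right)^{-1}$.
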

        
        \begin{proof}

            Manipulation of \eqref{eqn:iter_exact} allows us to deduce that
            \begin{align*}
				\boldsymbol{M}{\left(\boldsymbol{x}\iter{k + 1} - \boldsymbol{x}\iter{k}\right)}    &= \left(\boldsymbol{N} - \boldsymbol{M}\right)\boldsymbol{x}\iter{k} + \boldsymbol{A}\boldsymbol{x}^*
				\\
				\boldsymbol{A}\boldsymbol{s}\iter{k}                                                &= \boldsymbol{M}{\left(\boldsymbol{x}\iter{k} - \boldsymbol{x}\iter{k + 1}\right)}.
            \end{align*}
            Given that $\boldsymbol{A}^{-1} = \sum_{i=0}^{\infty}\boldsymbol{G}^i\boldsymbol{M}^{-1}$~\cite{li2018digit}, we therefore have
            \begin{equation*}
                \boldsymbol{s}\iter{k} = \sum_{i=0}^{\infty}\boldsymbol{G}^i{\left(\boldsymbol{x}\iter{k} - \boldsymbol{x}\iter{k + 1}\right)}.
            \end{equation*}
            Taking norms and recalling that $\infnorm{\boldsymbol{G}} < 1$,
            \begin{align}
                \infnorm{\boldsymbol{s}\iter{k}}    &\leq \infnorm{\sum_{i=0}^{\infty}\boldsymbol{G}^i}\infnorm{\boldsymbol{x}\iter{k} - \boldsymbol{x}\iter{k + 1}}
                \nonumber\\
                                                    &\leq \frac{1}{1 - \infnorm{\boldsymbol{G}}}\infnorm{\boldsymbol{x}\iter{k} - \boldsymbol{x}\iter{k + 1}}.
                \label{eqn:k_diffs_2_residue}
            \end{align}
            
            Let $j$ be the index for which $x_j\iter{k}$ and $x_j\iter{k + 1}$ are the successive elements sharing the fewest identical MSDs.
            We define the number of contiguous MSDs shared by the $j$th elements as $D$.
            From Lemma~\ref{lem:redundantInterval} we know that
            \begin{equation*}
                x_j\iter{k} \in \left(\sum_{i = 1}^D x_{ji}\iter{k} r^{-i} - r^{-D},~\sum_{i = 1}^D x_{ji}\iter{k} r^{-i} + r^{-D}\right)
            \end{equation*}
            and
            \begin{equation*}
                x_j\iter{k + 1} \in \left(\sum_{i = 1}^D x_{ji}\iter{k + 1} r^{-i} - r^{-D},~\sum_{i = 1}^D x_{ji}\iter{k + 1} r^{-i} + r^{-D}\right).
            \end{equation*}
            Since $x_{ji}\iter{k} = x_{ji}\iter{k + 1}~\forall i \in \left\{1,2,\cdots,D\right\}$, we find that
            \begin{equation*}
    	        \abs{x_j\iter{k} - x_j\iter{k + 1}} < 2r^{-D},
    	    \end{equation*}
            giving a bound on the vector norm of
            \begin{equation}
                \infnorm{\boldsymbol{x}\iter{k} - \boldsymbol{x}\iter{k + 1}} < 2r^{-D}.
                \label{eqn:vector_diff_K}
            \end{equation}
            
            Transformation of \eqref{eqn:iter_exact_g} reveals that
            \begin{align*}
                \boldsymbol{x}\iter{k + 1}  &= \boldsymbol{G}\boldsymbol{x}\iter{k} + \boldsymbol{M}^{-1}\boldsymbol{A}\boldsymbol{x}^*
                \\
                &= \boldsymbol{G}\boldsymbol{x}\iter{k} + \left(\boldsymbol{I} - \boldsymbol{G}\right)\boldsymbol{x}^*
                \\
                \boldsymbol{s}\iter{k + 1}  &= \boldsymbol{G}\boldsymbol{s}\iter{k}.
            \end{align*}
            Taking norms,
            \begin{equation}
                \infnorm{\boldsymbol{s}\iter{k + 1}} \leq \infnorm{\boldsymbol{G}}\infnorm{\boldsymbol{s}\iter{k}},
                \label{eqn:norm_residue_decrease}
            \end{equation}
            which, when combined with \eqref{eqn:k_diffs_2_residue}, results in
            \begin{equation*}
                \infnorm{\boldsymbol{s}\iter{k + 1}} \leq \frac{\infnorm{\boldsymbol{G}}}{1 - \infnorm{\boldsymbol{G}}} \infnorm{\boldsymbol{x}\iter{k} - \boldsymbol{x}\iter{k + 1}}.
            \end{equation*}
            Substitution of \eqref{eqn:vector_diff_K} then gives
            \begin{equation}
                \infnorm{\boldsymbol{s}\iter{k + 1}} < \frac{2\infnorm{\boldsymbol{G}}}{1 - \infnorm{\boldsymbol{G}}} r^{-D}.
                \label{eqn:res_bound_G}
            \end{equation}

        \end{proof}
	
	    Given a particular residue bound, our next task is to show that we can guarantee MSD stability within the current and future approximants.
         	
     	\begin{lemma}[Existence of digit stability]
     	\label{lem:ConstantExistence}
    		
    		If the condition
    		\begin{equation}
    		    \infnorm{\boldsymbol{s}\iter{\hat{k}}} < r^{-D}
    		    \label{eqn:bound_shatk}
    		\end{equation}
    		holds, then $x_j^*$ is consistent with the $D - 1$ MSDs of $x_j\iter{k}~\forall k \geq \hat{k}~\forall j$, and these MSDs are stable.
    		
    	\end{lemma}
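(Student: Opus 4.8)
The plan is to localize every future approximant within a fixed interval around $\boldsymbol{x}^*$, exhibit a single $(D-1)$-digit value that is consistent with all of them, and then appeal to Definition~\ref{def:StableDigits}. First I would establish that the residue bound \eqref{eqn:bound_shatk} propagates forward: since \eqref{eqn:norm_residue_decrease} gives $\infnorm{\boldsymbol{s}\iter{k+1}} \leq \infnorm{\boldsymbol{G}}\infnorm{\boldsymbol{s}\iter{k}}$ with $\infnorm{\boldsymbol{G}} < 1$, the sequence $\infnorm{\boldsymbol{s}\iter{k}}$ is non-increasing, so $\infnorm{\boldsymbol{s}\iter{k}} < r^{-D}$ for every $k \geq \hat{k}$. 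Componentwise, this means $x_j\iter{k} \in \left(x_j^* - r^{-D},~x_j^* + r^{-D}\right)$ for all $k \geq \hat{k}$ and all $j$; call this interval $I_j$, which has width $2r^{-D}$ and, crucially, does not depend on $k$.

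Next I would produce a $k$-independent $(D-1)$-digit number $\chi_j$ whose consistency interval contains $I_j$. By Definition~\ref{def:ConsistentDigits}, a value is consistent with $\chi_j$ exactly when it lies in $\left(\chi_j - r^{-(D-1)},~\chi_j + r^{-(D-1)}\right)$, an interval of width $2r^{-(D-1)}$ centered at $\chi_j$. Requiring $I_j$ to sit inside this interval forces the admissible centers to satisfy $x_j^* - r^{-(D-1)} + r^{-D} \leq \chi_j \leq x_j^* + r^{-(D-1)} - r^{-D}$, a closed interval of width $2r^{-D}\left(r-1\right)$. Distinct $(D-1)$-digit values are spaced $r^{-(D-1)} = r\,r^{-D}$ apart, and because $2\left(r-1\right) \geq r$ for every radix $r \geq 2$, this admissible interval is at least one spacing wide and therefore contains a representable $(D-1)$-digit value $\chi_j$. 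This counting step is the crux of the argument and the source of the single-digit slack: the same reasoning applied to a $D$-digit prefix would demand that a width-$2r^{-D}$ interval contain a width-$2r^{-D}$ consistency interval, which fails unless the two coincide, so exactly one MSD must be sacrificed.

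Finally I would close the argument. Because $\chi_j$ was chosen so that $I_j \subseteq \left(\chi_j - r^{-(D-1)},~\chi_j + r^{-(D-1)}\right)$, every $x_j\iter{k}$ with $k \geq \hat{k}$, as well as $x_j^*$ itself, is consistent with $\chi_j$; by the representation dual to Lemma~\ref{lem:redundantInterval}, each such $x_j\iter{k}$ admits a form whose $D-1$ MSDs are precisely the digits of $\chi_j$. Since $\chi_j$ is independent of $k$, fixing these digits yields $x_{ji}\iter{k} = x_{ji}\iter{\hat{k}}$ for all $k > \hat{k}$ and $i \in \left\{1,\dots,D-1\right\}$, which is stability in the sense of Definition~\ref{def:StableDigits}, while consistency of $x_j^*$ with these MSDs is immediate from $x_j^* \in I_j$. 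I expect the main obstacle to be this second step: verifying that the grid of representable $(D-1)$-digit values always meets the admissible-center interval, and handling the open-versus-closed endpoints together with values of $x_j^*$ lying near $\pm 1$ with care.
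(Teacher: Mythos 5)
Your route is genuinely different from the paper's, and its internal steps are fine as far as they go: the forward propagation of the residue bound, and the grid-counting observation that a closed interval of width $2\left(r-1\right)r^{-D}$ must contain a $(D-1)$-digit value because $2\left(r-1\right)\geq r$, are both correct. But what this constructs is an \emph{existence} statement: every future approximant value, and $x_j^*$, is consistent with \emph{some} common $(D-1)$-digit value $\chi_j$, so each approximant \emph{admits} a representation beginning with the digits of $\chi_j$. That is not what the lemma asserts. The lemma, read against Definition~\ref{def:StableDigits}, is about the digits $x_{ji}\iter{k}$ that the iteration actually produces: it claims $x_{ji}\iter{k} = x_{ji}\iter{\hat{k}}$ for all $i \leq D-1$ and all $k \geq \hat{k}$, and that $x_j^*$ is consistent with \emph{that} prefix---the one the hardware freezes and stops recomputing. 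In a redundant system a value does not determine its MSDs; that is the whole point of redundancy. In maximally redundant radix 10, the value $0.45$ can be written as $(4,5)$ or as $(5,\overline{5})$, so two approximants with nearly equal values can legitimately carry different leading digits. Nothing in your argument excludes this: $\chi_j$ is never tied to the prefix of $x_j\iter{\hat{k}}$, and the phrase ``fixing these digits'' is a choice the prover does not get to make, since the representations are emitted by the algorithm, not selected after the fact.

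The paper's proof is built precisely to supply this anchoring, which is why it takes a perturbation/contradiction form rather than a covering form. It centers the residue interval on the actual approximant, $x_j^* \in I = \left(x_j\iter{k} - \infnorm{\boldsymbol{G}}^{k-\hat{k}}r^{-D},~x_j\iter{k} + \infnorm{\boldsymbol{G}}^{k-\hat{k}}r^{-D}\right)$, then shows that a unit change to \emph{any} of the first $D-1$ digits of that representation displaces this interval by at least $r^{-\left(D-1\right)}$, producing an interval $I'$ with $\min{I'} - \max{I} = \left(r - 2\infnorm{\boldsymbol{G}}^{k-\hat{k}}\right)r^{-D} \geq 0$, hence $I \cap I' = \emptyset$ and $x_j^* \notin I'$; a future approximant carrying the perturbed prefix would therefore violate the convergence bound. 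Whatever residual subtleties that argument has (it does not explicitly treat a changed prefix compensated by changed trailing digits), it is the step that converts a statement about \emph{values} into a statement about \emph{digits}, and it has no counterpart in your proof. To close your gap you would need to show that $\chi_j$ can be taken equal to the actual $(D-1)$-digit prefix of $x_j\iter{\hat{k}}$ and that no later approximant can be emitted with a different prefix---which is exactly the paper's contradiction argument, not a consequence of your covering construction.
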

    	
    	\begin{proof}

    	    Convergence results on the algorithm ensure that there must exist an approximant $\hat{k}$ for which
    	    \begin{equation*}
    	        x_j^* \in \left(x_j\iter{\hat{k}} - r^{-D},~x_j\iter{\hat{k}} + r^{-D}\right) \quad\forall j.
    	    \end{equation*}
    	    From Lemma~\ref{lem:redundantInterval}, we know that $x_j^*$ is consistent with $x_j\iter{\hat{k}}$.
    	    
    	    Through repeated self-substitution of \eqref{eqn:norm_residue_decrease},
            \begin{equation}
                \infnorm{\boldsymbol{s}\iter{k}} \leq \infnorm{\boldsymbol{G}}^{k - \hat{k}}\infnorm{\boldsymbol{s}\iter{\hat{k}}}
                \label{eqn:residue_bound_k}
            \end{equation}
    	    which, given \eqref{eqn:bound_shatk}, means that
    	    \begin{equation*}
                \infnorm{\boldsymbol{s}\iter{k}} < \infnorm{\boldsymbol{G}}^{k - \hat{k}}r^{-D}
            \end{equation*}
            and thus
    	    \begin{equation*}
    	        \abs{s_j\iter{k}} < \infnorm{\boldsymbol{G}}^{k - \hat{k}}r^{-D}\quad\forall j.
    	    \end{equation*}
    	    For approximant $k$, therefore,
    	    \begin{equation*}
    	        x_j^* \in I \coloneqq \left(x_j\iter{k} - \infnorm{\boldsymbol{G}}^{k - \hat{k}}r^{-D},~x_j\iter{k} + \infnorm{\boldsymbol{G}}^{k - \hat{k}}r^{-D}\right) \quad\forall j.
    	    \end{equation*}
    	    
    	    Let us consider how the perturbation of one or more of the $D - 1$ MSDs in any approximant $k \geq \hat{k}$ would affect algorithmic convergence.
    	    Such a perturbation would produce a new interval, $I'$.
    	    If $x_j^* \notin I'$, such a new representation of $x_j$ would be inconsistent with the proof of convergence, thus the $D - 1$ MSDs of $x_j\iter{k}$ must be identical for all $k \geq \hat{k}$.

            Consider an increase of the $D - 1$th MSD by one unit, leading to a representation consistent with any value in
            \begin{multline*}
    	        I' \coloneqq \Big(x_j\iter{\hat{k}} - \infnorm{\boldsymbol{G}}^{k - \hat{k}}r^{-D} + r^{-(D - 1)},
    	        \\
    	        x_j\iter{\hat{k}} + \infnorm{\boldsymbol{G}}^{k - \hat{k}}r^{-D} + r^{-\left(D - 1\right)}\Big) \quad\forall j.
    	    \end{multline*}
    	    Comparing the upper bound of $I$ and the lower bound of $I'$, we have
    	    \begin{align}
    	        \min{I'} - \max{I}  &= -2\infnorm{\boldsymbol{G}}^{k - \hat{k}}r^{-D} + r^{-\left(D - 1\right)} \nonumber
    	        \\
    	                            &= \left(r - 2\infnorm{\boldsymbol{G}}^{k - \hat{k}}\right)r^{-D}. \label{eqn:minmax}
    	    \end{align}
    	    Since $r \geq 2$ and $\infnorm{\boldsymbol{G}} < 1$, \eqref{eqn:minmax} is strictly positive. This means that $I \cap I' = \emptyset$, and thus $x_j^* \notin I'$.
    	    
    	    Clearly, a unit increase of \emph{any} digit in $x\iter{k}_{ji}~\forall i \in \left\{1,2,\cdots,D - 1\right\}$ would lead to $I \cap I' = \emptyset$, violating the algorithm's convergence.
    	    A similar argument can be made for a unit decrease of $x\iter{k}_{ji}~\forall i \in \left\{1,2,\cdots,D - 1\right\}$.
    	    Thus, $x_j^*$ is consistent with the $D - 1$ MSDs of $x_j\iter{k}~\forall k \geq \hat{k}~\forall j$, and these MSDs are stable.
    	    
        \end{proof}
            
        We are now able to bound the current and future iterations' residues and ensure that stable MSDs exist, but the relationship between these two features is currently missing.
        Combining Lemmas~\ref{lem:ResidueBound} and \ref{lem:ConstantExistence} will allow us to establish this, thereby providing a guaranteed minimum number of stable digits for the current and all future approximants.
            
        \begin{theorem}[Inference of digit stability]
            
            If $\boldsymbol{x}\iter{\hat{k} - 1}$ and $\boldsymbol{x}\iter{\hat{k}}$ share a minimum of $D$ identical MSDs, then $x_j^*$ is consistent with the $D + \floor*{\log_r{\frac{1 - \infnorm{\boldsymbol{G}}}{2\infnorm{\boldsymbol{G}}^{k - \hat{k} + 1}}}} - 1$ MSDs of $x_j\iter{k}~\forall k \geq \hat{k}~\forall j$, and these MSDs are stable.

        \label{thm:StableDigits}
    	\end{theorem}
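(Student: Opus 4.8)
The plan is to chain the two preceding lemmas, but with the crucial observation that Lemma~\ref{lem:ConstantExistence} is not tied to the specific iteration $\hat{k}$: it converts \emph{any} residue bound of the form $\infnorm{\boldsymbol{s}\iter{K}} < r^{-E}$ into a guarantee of $E - 1$ stable MSDs from approximant $K$ onwards. I would therefore re-anchor that lemma at each iteration $k \geq \hat{k}$ in turn, feeding it a progressively tighter bound supplied by Lemma~\ref{lem:ResidueBound}.

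First, since $\boldsymbol{x}\iter{\hat{k} - 1}$ and $\boldsymbol{x}\iter{\hat{k}}$ share $D$ identical MSDs, I would apply Lemma~\ref{lem:ResidueBound} with $k$ set to $\hat{k} - 1$ to obtain $\infnorm{\boldsymbol{s}\iter{\hat{k}}} < \frac{2\infnorm{\boldsymbol{G}}}{1 - \infnorm{\boldsymbol{G}}} r^{-D}$. I would then propagate this bound forward using the contraction \eqref{eqn:residue_bound_k}, which holds for any reference iteration, to get, for every $k \geq \hat{k}$,
\[
\infnorm{\boldsymbol{s}\iter{k}} \leq \infnorm{\boldsymbol{G}}^{k - \hat{k}} \infnorm{\boldsymbol{s}\iter{\hat{k}}} < \frac{2\infnorm{\boldsymbol{G}}^{k - \hat{k} + 1}}{1 - \infnorm{\boldsymbol{G}}} r^{-D}.
\]

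Next, I would recast this bound into the $r^{-E}$ form demanded by Lemma~\ref{lem:ConstantExistence}. Defining $\tilde{D} = D + \floor*{\log_r \frac{1 - \infnorm{\boldsymbol{G}}}{2\infnorm{\boldsymbol{G}}^{k - \hat{k} + 1}}}$ as the largest integer for which $r^{-\tilde{D}}$ still dominates the right-hand side above, the floor identity $\floor*{D + x} = D + \floor*{x}$ (valid because $D$ is an integer) together with $\floor*{y} \leq y$ and the monotonicity of $r^{y}$ yields $r^{-\tilde{D}} \geq \frac{2\infnorm{\boldsymbol{G}}^{k - \hat{k} + 1}}{1 - \infnorm{\boldsymbol{G}}} r^{-D}$, and hence $\infnorm{\boldsymbol{s}\iter{k}} < r^{-\tilde{D}}$.

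Finally, I would invoke Lemma~\ref{lem:ConstantExistence} with iteration $k$ playing the role of $\hat{k}$ and $\tilde{D}$ playing the role of $D$: its hypothesis is precisely the bound just established, so it certifies that $x_j^*$ is consistent with the $\tilde{D} - 1$ MSDs of $x_j\iter{k}$ and that these are stable. Substituting the definition of $\tilde{D}$ reproduces the claimed count $D + \floor*{\log_r \frac{1 - \infnorm{\boldsymbol{G}}}{2\infnorm{\boldsymbol{G}}^{k - \hat{k} + 1}}} - 1$. The main obstacle I anticipate is conceptual rather than computational: one must justify that Lemma~\ref{lem:ConstantExistence} may be reapplied with a fresh, $k$-dependent digit count rather than remaining pinned to $\hat{k}$ and $D$, and verify that the floor is taken in the direction that preserves the strict inequality feeding that lemma's hypothesis.
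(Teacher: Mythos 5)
Your proposal is correct and follows essentially the same route as the paper's proof: apply Lemma~\ref{lem:ResidueBound} to the pair $\hat{k}-1$, $\hat{k}$, propagate the bound forward via \eqref{eqn:residue_bound_k}, absorb it into a power $r^{-\tilde{D}}$ using the floor, and invoke Lemma~\ref{lem:ConstantExistence}. Your explicit ``re-anchoring'' of Lemma~\ref{lem:ConstantExistence} at each $k \geq \hat{k}$ with the $k$-dependent digit count $\tilde{D}$ is in fact a slightly more careful articulation of what the paper does implicitly (its phrase ``apply Lemma~\ref{lem:ConstantExistence} with this bound on $\infnorm{\boldsymbol{s}\iter{\hat{k}}}$'' glosses over the fact that the derived bound and digit count vary with $k$), so no substantive difference exists between the two arguments.
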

        
        \begin{proof}
            
            Since the $D$ MSDs of each element of approximants $\hat{k} - 1$ and $\hat{k}$ are identical, we can apply Lemma~\ref{lem:ResidueBound} to approximants $\hat{k} - 1$ and $\hat{k}$ to find that \eqref{eqn:res_bound_G} holds for $\hat{k}$, i.e.,
            \begin{equation*}
                \infnorm{\boldsymbol{s}\iter{\hat{k}}} < \frac{2\infnorm{\boldsymbol{G}}}{1 - \infnorm{\boldsymbol{G}}}r^{-D}.
            \end{equation*}
            Substituting this inequality into \eqref{eqn:residue_bound_k}, we can deduce that
            \begin{align*}
                \infnorm{\boldsymbol{s}\iter{k}} &< \infnorm{\boldsymbol{G}}^{k - \hat{k}}\frac{2\infnorm{\boldsymbol{G}}}{1 - \infnorm{\boldsymbol{G}}} r^{-D}
                \\
                &= r^{-\left(D + \log_r{\frac{1 - \infnorm{\boldsymbol{G}}}{2\infnorm{\boldsymbol{G}}^{k - \hat{k} + 1}}}\right)}
                \\
                &\leq r^{-\left(D + \floor*{\log_r{\frac{1 - \infnorm{\boldsymbol{G}}}{2\infnorm{\boldsymbol{G}}^{k - \hat{k} + 1}}}}\right)}.
            \end{align*}
            We can therefore apply Lemma~\ref{lem:ConstantExistence} with this bound on $\infnorm{\boldsymbol{s}\iter{\hat{k}}}$, from which we are finally able to infer that $x_j^*$ is consistent with the $D + \floor*{\log_r{\frac{1 - \infnorm{\boldsymbol{G}}}{2\infnorm{\boldsymbol{G}}^{k - \hat{k} + 1}}}} - 1$ MSDs of $x_j\iter{k}~\forall k \geq \hat{k}~\forall j$, and that those MSDs are stable.
        \end{proof}
        
        Examination of Theorem~\ref{thm:StableDigits} allows us to understand the shapes of the stability regions seen in Fig.~\ref{plt:intro} for different $\infnorm{\boldsymbol{G}}$.
        The relationship between the number of identical MSDs within approximants $\hat{k}-1$ and $\hat{k}$ and the quantity that stabilize by approximant $\hat{k}$ is controlled by $D + \floor*{\log_r{\frac{1 - \infnorm{\boldsymbol{G}}}{2\infnorm{\boldsymbol{G}}^{k - \hat{k} + 1}}}} - 1$ with $k = \hat{k}$, i.e., $D + \floor*{\log_r{\frac{1-\infnorm{\boldsymbol{G}}}{2\infnorm{\boldsymbol{G}}}}} - 1$.
        For the most well conditioned systems, i.e., those with low $\infnorm{\boldsymbol{G}}$, $\log_r{\frac{1-\infnorm{\boldsymbol{G}}}{2\infnorm{\boldsymbol{G}}}}$ is more positive, while for particularly ill-conditioned systems it is more negative.
        This explains the leftward and rightward shifts present in Fig.~\ref{plt:intro} for high and low values of $\infnorm{\boldsymbol{G}}$, respectively.
        The point at which $D$ identical MSDs infer the presence of $D$ stable digits within approximant $\hat{k}$ occurs when $\infnorm{\boldsymbol{G}} = \frac{1}{2r+1}$.
        Beyond $\hat{k}$, we see a linear increase in $\log_r{\frac{1 - \infnorm{\boldsymbol{G}}}{2\infnorm{\boldsymbol{G}}^{k - \hat{k} + 1}}}$, and therefore in the number of stable digits, with $k$.
        This applies even for the most ill-conditioned systems; an increasing number of digits will therefore always stabilize over time.

    \section{Prototype Implementation}
    \label{sec:impl}
    	
    	In order to evaluate the effectiveness of our proposal, we built a hardware implementation based on our previous work, \architect{}~\cite{li2018digit}, modified to allow the runtime inference, and subsequent avoidance of recalculation, of digits known to have stabilized.
    	As the digits of approximant $k$ are generated, their values are compared on-the-fly with those of previously generated approximant $k-1$, fetched from on-chip memory.
    	Once some $D > 0$ successive MSDs are found to be identical across all pairs of elements $\boldsymbol{x}_j\iter{k-1}$ and $\boldsymbol{x}_j\iter{k}$, we designate $\hat{k} \leftarrow k$ and, for all subsequent approximants, the generation of each approximant's first
    	\begin{equation*}
    	    \psi\iter{k} = D + \floor*{\log_r{\frac{1 - \infnorm{\boldsymbol{G}}}{2\infnorm{\boldsymbol{G}}^{k - \hat{k} + 1}}}} - 1
    	\end{equation*}
    	digits is skipped.
    	Note that we do not need to calculate logarithms or perform exponentiation in hardware.
    	Instead, we can use the more computationally efficient form
    	\begin{equation}
    	    \psi\iter{k} = D + \floor*{\alpha - {\left(k-\hat{k}+1\right)}\beta} - 1,
    	    \label{eqn:psi}
    	\end{equation}
    	where $\alpha = \log_r{\frac{1-\infnorm{\boldsymbol{G}}}{2}}$ and $\beta = \log_r{\infnorm{\boldsymbol{G}}}$ are constants that we precompute and feed in along with $\boldsymbol{A}$, $\boldsymbol{b}$, and $\boldsymbol{x}\iter{0}$.
    	
    	Our prototype was a Jacobi method implementation.
    	Jacobi iterates in the form of \eqref{eqn:iter_exact} with $\boldsymbol{A} \in \mathbb{R}^{N \times N}$ and $\boldsymbol{M} = \func{\text{diag}}{\boldsymbol{A}}$.
        As a toy example, our implementation solved linear systems with matrix size $N = 2$.
    	Its datapath is shown in Fig.~\ref{fig:datapath}, and is identical in structure to that used in our previous work~\cite{li2018digit}, facilitating direct comparison.
    	Like its predecessor, this hardware is capable of arbitrary-accuracy result generation but, by virtue of the novel proposal in this article, it can do so more efficiently by skipping the calculation of MSDs known to have stabilized.
		
		\begin{figure}
			\centering
						\newcommand\busmark[3][west] {
				\draw[line width=1pt] (#2-0.1,#3-0.1) to (#2+0.1,#3+0.1);
				\node[anchor=#1] at (#2,#3) {\scriptsize 2};
			}
			\newcommand\busmarkp[3][west] {
				\draw[line width=1pt] (#2-0.1,#3-0.1) to (#2+0.1,#3+0.1);
				\node[anchor=#1] at (#2,#3) {\scriptsize 6};
			}
			\begin{tikzpicture}[yscale=-1.2]
			
			\begin{scope}[shape=circle, line width=1pt]
			\node[draw] (mult1) at (0,0) {\large$\times$};
			\node[draw] (mult2) at (3,0) {\large$\times$};
			\node[draw] (plus1) at (-0.8,1.7) {\large$+$};
			\node[draw] (plus2) at (2.2,1.7) {\large$+$};
			\end{scope}
			
			\node[draw,line width=1pt, minimum width=1.5cm, minimum height=1.5cm] (ram) at (5.8,0.7) {RAM};
			
			\node[anchor=west] (mc1) at (-2.1,-0.6) {$-\dfrac{a_{01}}{a_{00}}$};
			\node[anchor=west] (pc1) at (-2.1,0.53) {$\dfrac{b_{0}}{a_{00}}$};
			\node[anchor=west] (mc2) at (0.9,-0.6) {$-\dfrac{a_{10}}{a_{11}}$};
			\node[anchor=west] (pc2) at (0.9,0.53) {$\dfrac{b_{1}}{a_{11}}$};
			
			\begin{scope}[line width=0.6pt]
			\draw[-latex] (mult1.south) -- +(0,0.2) -| (plus1.north east);
			\draw[-latex] (mult2.south) -- +(0,0.2) -| (plus2.north east);
			
			\draw[-latex] (mc1.east) -| (mult1.north west);
			\draw[-latex] (pc1.east) -| (plus1.north west);
			\draw[-latex] (mc2.east) -| (mult2.north west);
			\draw[-latex] (pc2.east) -| (plus2.north west);
			
			\draw[-latex] (plus2.south) -- +(0,0.3) -| ([xshift=-2mm]ram.south);
			\draw[-latex] (plus1.south) -- +(0,0.4) -| ([xshift=2mm]ram.south);
			
			\draw[-latex] ([xshift=-2mm]ram.north) -- +(0,-0.7) -| (mult2.north east);
			\draw[-latex] ([xshift=2mm]ram.north) -- +(0,-1.2) -| (mult1.north east);
			\end{scope}
			
			\busmark{6.0}{1.8}
			\busmark{5.6}{1.8}
			\busmark{6.0}{-0.3}
			\busmark{5.6}{-0.3}
			\busmark[south]{2.35}{-0.6}
			\busmark[south]{-0.65}{-0.6}
			\busmark{-1.08}{0.95}
			\busmark{-0.52}{0.95}
			\busmark{1.92}{0.95}
			\busmark{2.48}{0.95}
			
			\node at (4.5, -0.9) {$x_0\iter{k}$};
			\node at (3.4, -1.4) {$x_1\iter{k}$};
			\node at (0.6, 2.1) {$x_0\iter{k+1}$};
			\node at (4.2, 2.0) {$x_1\iter{k+1}$};
			
			\end{tikzpicture}
			\caption{
				Arbitrary-precision two-dimensional Jacobi method benchmark datapath~\cite{li2018digit}.
				Adders and multipliers are radix-2 signed-digit online operators with online delay $\delta_\times = 3$ and $\delta_+ = 2$.
			}
			\label{fig:datapath}
		\end{figure}
    
    \section{Evaluation}
    \label{sec:eval}
        
        There are three obvious comparison points for our implementation: \architect{} with online delay-based MSD elision~\cite{li2018digit}, the E-method~\cite{ercegovac1977general}, and the broad class of conventional, LSD-first iterative solvers.
        For the MSD-first methods, we conducted theoretical analysis (Section~\ref{subsec:comparison_analysis}) to uncover the shortcomings of the prior art.
        We also performed experiments (Section~\ref{subsec:HWevaluation}) to quantify the gains realized through the employment of our proposal in hardware.
        For comparison against LSD-first arithmetic, we implemented datapaths composed of parallel-in, serial-out (PISO) operators of the same form we previously used to evaluate \architect{}.
        These operate in a similar digit-serial fashion, but require the compile-time determination of precision.
        
        Our hardware implementations all targeted a Xilinx Virtex UltraScale field-programmable gate array (part number XCVU190-FLGB2104-3-E) and were compiled using Vivado~2016.4.
        We verified all results obtained in hardware against golden software models written in MATLAB.
        
        \subsection{Theoretical Analysis}
		\label{subsec:comparison_analysis}
			
			As was mentioned in Section~\ref{plt:intro}, \architect{}'s former MSD elision strategy is unable to infer the existence of stable digits~\cite{li2018digit}.
			In the worst case, as shown in Table~\ref{tab:comparison}, we are forced to compute the values of $\delta$ \emph{more} MSDs for \emph{every} approximant when using that method, potentially wasting significant time and energy in doing so.
			The hardware realization of the proposal in this article is actually simpler than its online delay-based predecessor, leading to the multiple performance boosts we elaborate upon in Section~\ref{subsubsec:perf_comparison}.
			A benefit of our previous proposal is its applicability to any iterative method.
			We leave the generalization of the technique we propose in this article to future work.
			
			\begin{table*}
			    \begin{threeparttable}
    				\caption{Properties of Approaches for the Inference of Identical and Stable MSDs in Current and Future Approximants}
    				\centering
    				\begin{tabular}{ccccccc}
    					\toprule
    					\multirow{2}{*}{Approach}														& \multirow{2}{*}{\makecell{Iterative\\method}}	& \multirow{2}{*}{\makecell{Runtime\\detection}}	& \multirow{2}{*}{$\infnorm{\boldsymbol{G}}$}	& \multirow{2}{*}{$\infnorm{\boldsymbol{b}}$}	& \multirow{2}{*}{\makecell{Guaranteed-stable digits in\\approximant $k \geq \hat{k}$}}						& \multirow{2}{*}{\makecell{Guaranteed-identical digits between\\approximants $k$ and $k + 1~\forall k \geq \hat{k}$}}	\\
    					\\
    					\midrule
    					Our previous work~\cite{li2018digit}	& Any								& \yup								& --			& $\left[0, \infty\right)$						& $0$                              															& $D - \delta{\left(k - \hat{k} + 1\right)}$																\\
    					E-method~\cite{ercegovac1977general}											& Jacobi											& \nope												& $\left[0, \nicefrac{1}{2r}\right]$				& $\left[0, 1\right)$				& $D + k - \hat{k} + 1$																							& $D + k - \hat{k} + 1$																										\\
    					\textbf{This work}																& Stationary										& \yup												& $\left[0, 1\right)$	        				& $\left[0, \infty\right)$						& $D + \floor*{\alpha - {\left(k-\hat{k}+1\right)}\beta} - 1$	& $D + \floor*{\alpha - {\left(k-\hat{k}+1\right)}\beta} - 1$				\\
    					\bottomrule
    				\end{tabular}
    				\begin{tablenotes}
    				    \item\emph{To enable comparison, we assume that $D$ MSDs of all elements of the most recently computed two approximants, $\hat{k}-1$ and $\hat{k}$, are known to be the same.}
        				\item\emph{For compactness, we abbreviate $\alpha = \log_r{\frac{1-\infnorm{\boldsymbol{G}}}{2}}$ and $\beta = \log_r{\infnorm{\boldsymbol{G}}}$ in the final row of the table.}
    				\end{tablenotes}
    				\label{tab:comparison}
    			\end{threeparttable}
			\end{table*}
			
			The E-method, designed for the efficient evaluation of polynomial and rational functions, is the only existing work allowing the declaration of MSDs as stable across the approximants of an iterative algorithm~\cite{ercegovac1977general}.
			Its MSD-first Jacobi solver produces one new less-significant digit for each of the elements of its solution vector per iteration.
			To achieve this, the target linear system $\boldsymbol{A}\boldsymbol{x} = \boldsymbol{b}$ must fulfill a list of strict conditions.
			In particular:
			\begin{enumerate*}[label=(\roman*)]
                \item $\infnorm{\boldsymbol{G}} \leq \nicefrac{1}{2r}$, i.e., a more restrictive requirement than strict diagonal dominance of $\boldsymbol{A}$, and
                \item $\infnorm{\boldsymbol{b}} < 1$. \label{list:emethod_b}
            \end{enumerate*}
			\ref{list:emethod_b} is required since $\boldsymbol{b}$ forms the algorithm's initial internal residue, which must begin and remain bounded within $\left(-1,1\right)^N$ in order to produce valid digits at each iteration.
			
    		As reflected in Table~\ref{tab:comparison}, our proposal is far less restrictive than the E-method.
    		Our work holds for any stationary iterative method, while the E-method is a particular Jacobi implementation.
    		Furthermore, we impose no restrictions upon the target system beyond $\infnorm{\boldsymbol{G}} < 1$, meaning that users can realize the benefits of digit stability even for very poorly conditioned matrices.
    		In order to achieve the same rate of stable MSD growth, solving \eqref{eqn:psi} for $\beta = 1$ shows that our proposal requires $\infnorm{\boldsymbol{G}} = \nicefrac{1}{r}$: double that for the E-method.
            This technique is thus able to achieve the E-method's growth rate for a wider range of differently conditioned matrices.
    		With $\infnorm{\boldsymbol{G}} < \nicefrac{1}{r}$, we achieve a growth rate faster than the E-method's, while the opposite is true when $\infnorm{\boldsymbol{G}} \in \left(\nicefrac{1}{r},1\right)$.
    		An advantage of the E-method over our proposal is that the former does not require knowledge of MSDs shared between approximants; the conditions enumerated above guarantee that digits will begin to stabilize immediately.
    		However, as we showed in our previous work, it is trivial to implement logic to detect the existence of identical MSDs in successive approximants~\cite{li2018digit}.

	    \subsection{Empirical Analysis}
	    \label{subsec:HWevaluation}
	        
	        In order to compare the performance of our new hardware implementation (Section~\ref{sec:impl}) against that of our previous work~\cite{li2018digit}, we experimented with linear systems of the form
	        \begin{equation}
				\boldsymbol{A}_m =
				\begin{pmatrix} 
					1			&	1 - 2^{-m}	\\
					1 - 2^{-m}	&	1
				\end{pmatrix},
    			\quad		
    			\boldsymbol{b} =
    				\begin{pmatrix}
    					b_0	\\
    					b_1
    				\end{pmatrix},
    			\quad
    			\boldsymbol{x}\iter{0} = \boldsymbol{0},
    			\label{eqn:exp_form}
			\end{equation}
			with $b_0$ and $b_1$ randomly selected from a uniform distribution in the range $\left[0,1\right)$.
			\revision{We used the termination criterion $\twonorm{\boldsymbol{A}_m\boldsymbol{x} - \boldsymbol{b}} < \eta$, with $\eta \in \left(0,1\right]$}.
			The conditioning of $\boldsymbol{A}_m$ was controlled via $m \geq 0$, and convergence was always guaranteed since $\boldsymbol{A}_m$ is strictly diagonally dominant $\forall m$.
			\revision{This setup mirrored that employed in our previous work~\cite{li2018digit}, enabling direct comparison.}
		
		    \subsubsection{Scalablity Comparison}
            \label{subsubsec:perf_comparison}
    		    
                In Fig.~\ref{plt:digit_cc_time}, we consider the scalability of arbitrary-precision two-dimensional Jacobi solvers featuring the techniques enabling the avoidance of MSD recomputation detailed in this article and our previous work~\cite{li2018digit}.
                For these experiments, we fixed $m=1$ in \eqref{eqn:exp_form} and varied accuracy bound $\eta$.
                
        		\begin{figure}
        			\centering
        			\begin{tikzpicture}

	\begin{groupplot}[
		scale only axis,
		width=0.82\columnwidth,
		height=0.51\columnwidth,
		group style={
		    group size=1 by 2,
		    xlabels at=edge bottom,
		    xticklabels at=edge bottom,
		    vertical sep=2em
		},
		xlabel near ticks,
		xlabel={Accuracy bound $\eta$},
		ylabel absolute,
		xmode=log,
		ymode=log,
		xmin=2.5,
		xmax=1400,
		xtick={4,8,16,32,64,128,256,512,1024},
		xticklabels={$2^{-4}$,$2^{-8}$,$2^{-16}$,$2^{-32}$,$2^{-64}$,$2^{-128}$,$2^{-256}$,$2^{-512}$,$2^{-1024}$},
		xticklabel style={rotate=30}
	]

    	\nextgroupplot[
    		ylabel={Solve time (s)},
    		ymin=0.0000001,
    		ymax=0.1,
    		ytick={0.1,0.01,0.001,0.0001,0.00001,0.000001,0.0000001},
    		yticklabels={$10^{-1}$,$10^{-2}$,$10^{-3}$,$10^{-4}$,$10^{-5}$,$10^{-6}$,$10^{-7}$},
    	]
    
    	\addplot[arith plot]
    	table [x=Digits, y=Time_ARITH]{data/digit_time.dat};
    	\addplot[TC_Bri_1 plot]
    	table [x=Digits, y=Time_TC_Bri]{data/digit_time.dat};
    
    	\node [text width=1em,anchor=north] at (axis description cs:0.5,1) {\subcaption{\label{plt:accuracy_time}}};
    	
    	\nextgroupplot[
    		ylabel={Stable digits elided},
    		ymin=1,
    		ymax=1000000,
    		ytick={1, 10, 100,1000,10000,100000, 1000000},
    		yticklabels={$10^{0}$,$10^{1}$,$10^{2}$,$10^{3}$,$10^{4}$,$10^{5}$,$10^{6}$}
    	]
	
		\addplot[bar plot]
		table [x=Digits, y=Digit_Diff]{data/digit_time.dat};
		
		\addplot[TC_Bri_1 plot]
		table [x=Digits, y=DigitTC_Bri]{data/digitTCBri.dat};
        \label{plt:accuracy_new}
		\addplot[arith plot]
		table [x=Digits, y=DigitARITH]{data/digitarith.dat};
		\label{plt:accuracy_arith}
		
		\node [text width=1em,anchor=north] at (axis description cs:0.5,1) {\subcaption{\label{plt:accuracy_digits}}};
	
	\end{groupplot}

\end{tikzpicture}
        			\caption{
        			    How the requested accuracy bound $\eta$ affects the (\subref{plt:accuracy_time})~solve time and (\subref{plt:accuracy_digits})~number of stable digits that do not need to be calculated by \architect{} implementations using the MSD recalculation avoidance strategies introduced in this article~(\ref{plt:accuracy_new}) and our previous work~\cite{li2018digit}~(\ref{plt:accuracy_arith}) for the solution of systems of the form in \eqref{eqn:exp_form} with $m=1$.
        			    The bars in (\subref{plt:accuracy_digits}) denote the absolute differences between the competing implementations.
        			    Points with zero elided stable digits are not visible due to (\subref{plt:accuracy_digits})'s logarithmic $y$-axis.
        			}
        			\label{plt:digit_cc_time}
            	\end{figure}
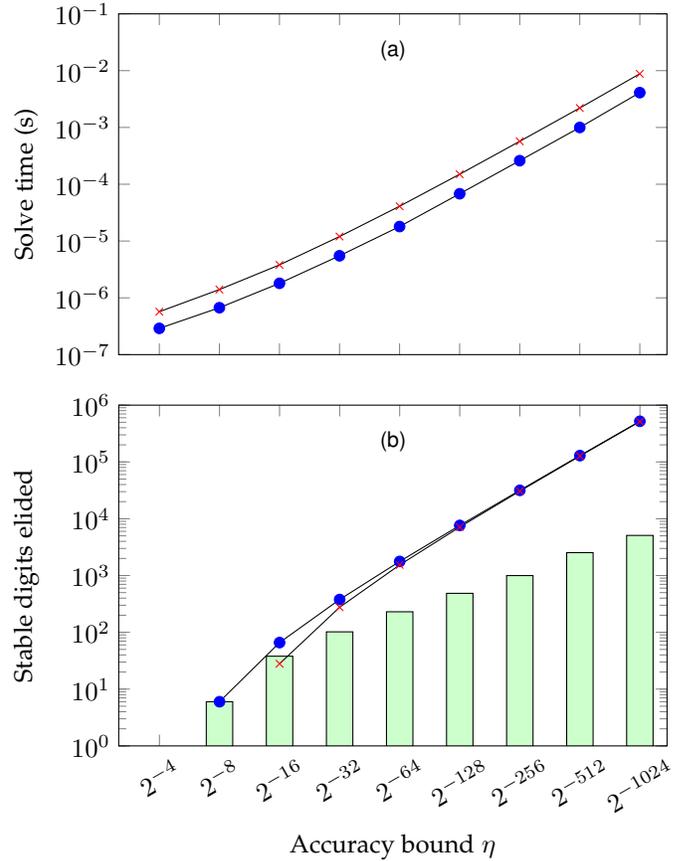
            	
            	Fig.~\ref{plt:accuracy_time} shows that we achieve approximately constant solve time speedups over our previous work.
            	Speedups ranged from 2.0$\times$ (for $\eta = 2^{-4}$) to 2.2$\times$ ($2^{-1024}$).
            	The saturation is due to properties of the arbitrary-precision arithmetic operators shared by both implementations, which require an increasing number of clock cycles to generate each digit as the significance of those digits decreases~\cite{ARCHITECT, ARCHITECT_TVLSI}.
            	As $\eta$ falls, the increasing time per digit generation begins to dominate the gains realized through our new proposal's MSD elision.
            	
        		Fig.~\ref{plt:accuracy_digits} shows that our new analysis allows us to avoid the recomputation of a mean 1.3$\times$ more MSDs than when using the online delay-based proposal introduced in our previous work.
        		With a very low accuracy requirement, $\eta = 2^{-4}$, neither implementation computes for long enough to allow for any MSD elision.
        		Our new proposal becomes effective sooner than its predecessor, at $\eta = 2^{-8}$ rather than $2^{-16}$, due to the former's lack of dependence on online delay $\delta$.
        		For our highest tested accuracy, that with $\eta = 2^{-1024}$, the difference in uncomputed MSDs was 5085 in favor of our new technique.

                Along with the approximately linear increase in newly elided MSDs shown in Fig.~\ref{plt:accuracy_digits}, the speedups shown in Fig.~\ref{plt:accuracy_time} were the result of logic simplifications---and consequently maximum operating frequency increases---over our former implementation.
                The digit generation-scheduling logic for our new implementation is more straightforward than that of its predecessor due to the latter's aforementioned dependence on $\delta$.
                As shown in Table~\ref{tab:HW_comparison}, the implementation we propose in this article is smaller and faster than that using our formerly proposed MSD elision approach.
        		
    			\begin{table}
    				\caption{Comparison of Iterative Solvers with MSD Elision Capability}
    				\centering
    					\begin{tabular}{ccccc}
    						\toprule
    						\multirow{2}{*}{Approach}													& \multirow{2}{*}{\makecell{Lookup\\tables}}	& \multirow{2}{*}{Flip-flops}	& \multirow{2}{*}{\makecell{Memory\\blocks}}	& \multirow{2}{*}{\makecell{Max. operating\\frequency (MHz)}}	\\
    						\\
							\midrule
    						\multirow{2}{*}{\makecell{Our previous\\work~\cite{li2018digit}}}	& \multirow{2}{*}{1191}							& \multirow{2}{*}{992}			& \multirow{2}{*}{24}						& \multirow{2}{*}{150}											\\
    						\\
    						\textbf{This work}															& 1047											& 849							& 24										& 190															\\
    						\bottomrule
    					\end{tabular}
    				\label{tab:HW_comparison}
    			\end{table}
    
    		\subsubsection{Performance Comparison}
    		\label{subsubsec:qual_comparison}
                
                We now show how the conditioning of $\boldsymbol{A}_m$ affects the performance of our arbitrary-precision iterative solvers compared to implementations relying on traditional LSD-first arithmetic.
                For the experiments reported in Fig.~\ref{plt:qual}, we relaxed the constraint on $m$ in \eqref{eqn:exp_form} but fixed $\eta = 2^{-6}$.
    	        
                \begin{figure}
        			\centering
        			\begin{tikzpicture}
	
	\begin{groupplot}[
		scale only axis,
		width=0.82\columnwidth,
		height=0.51\columnwidth,
		group style={
		    group size=1 by 2,
		    xlabels at=edge bottom,
		    xticklabels at=edge bottom,
		    vertical sep=2em
		},
		xlabel near ticks,
		xlabel={$m$},
		ylabel absolute,
		xmode=log,
		ymode=log,
		log ticks with fixed point,
		xmin=0.007,
		xmax=15
	]

    	\nextgroupplot[
    		ylabel={Speedup versus LSD-32 ($\times$)},
    		ytick={0.1,0.25,0.5,0.75,1,1.25,1.5},
    		ymin=0.7,
    		ymax=1.6,
    	]
    		
    	\addplot[thick, black!25] coordinates {(0.001,1) (50,1)};
    	\addplot[dashed, red!75] coordinates {(0.27,1.6) (0.27,0.7)};
    	\addplot[dashed, blue!75] coordinates {(3.0,1.6) (3.0,0.7)};
    	\node[anchor=north,rotate=90] at (axis cs:0.165,0.84) {$m = 0.27$};
    	\node[anchor=north,rotate=90] at (axis cs:3.05,1.26) {$m = 3.0$};
    
    	\addplot[TC_Bri_1 plot]
    	table [x=m, y expr={1/\thisrow{TCB_AB}}]{data/qual_AB.dat};
    	\label{plt:qual_TCB}
    
    	\addplot[arith plot]
    	table [x=m, y expr={1/\thisrow{Arith_AB}}]{data/qual_AB.dat};
    	\label{plt:qual_arith}
    	
    	\node [text width=1em,anchor=north] at (axis description cs:0.5,1) {\subcaption{\label{plt:qual_32}}};
    
    	\nextgroupplot[
    		ylabel={Speedup versus LSD-8 ($\times$)},
    		ytick={0.025,0.05,0.1,0.25,0.5,1},
    		ymin=0.15,
    		ymax=1.1
    	]
    
    	\addplot[thick, black!25] coordinates {(0.001, 1) (50, 1)};
    
    	\addplot[TC_Bri_1 plot]
    	table [x=m, y expr={1/\thisrow{TCB_BC}}]{data/qual_BC.dat};
    
    	\addplot[arith plot]
    	table [x=m, y expr={1/\thisrow{Arith_BC}}]{data/qual_BC.dat};
    	
    	\node [text width=1em,anchor=north] at (axis description cs:0.5,1) {\subcaption{\label{plt:qual_8}}};

	\end{groupplot}
	
\end{tikzpicture}
        			\caption{
        				How the conditioning of $\boldsymbol{A}_m$ affects the solve time of \architect{} implementations with MSD elision implemented per the proposal in this article~(\ref{plt:qual_TCB}) and our previous work~\cite{li2018digit}~(\ref{plt:qual_arith}) versus LSD-first arithmetic with a fixed precision of (\subref{plt:qual_32})~32 and (\subref{plt:qual_8})~8 bits.
        			    As a result of the analysis presented herein, our new implementation computes more quickly than LSD-32 when $m \leq 3.0$, whereas our previous implementation can only beat LSD-32 when $m \leq 0.27$.
        				(\subref{plt:qual_8}) shows that both arbitrary-precision iterative solvers lead to an effectively infinite speedup when $m > 2$ since LSD-8 cannot ever converge to accurate-enough results.
        				While performance slowdowns were observed for $m \leq 2$, our new proposal outperformed its predecessor in all cases, as for LSD-32.
        			}
        			\label{plt:qual}
        		\end{figure}
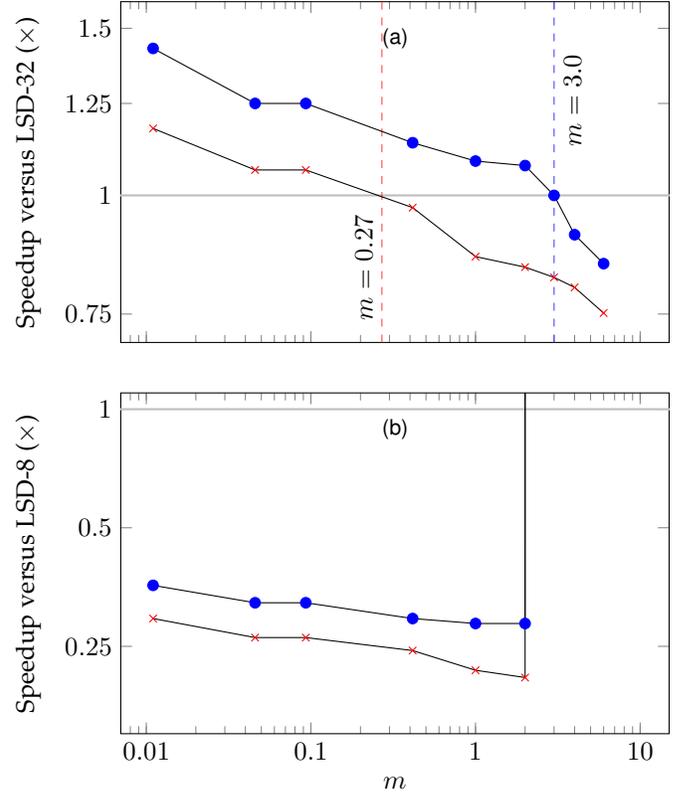

                In Fig.~\ref{plt:qual_32}, we compare our implementations against a Jacobi solver featuring LSD-first PISO arithmetic operators with a precision of 32 bits (LSD-32), a commonly encountered data width.
                For the solution of well conditioned linear systems, i.e., those with low $m$, LSD-32 is said to have \emph{over-budgeted} precision: results take longer to compute than had a lower precision been chosen instead.
                As a result, both \architect{}-based implementations compute more quickly than LSD-32 when $m \leq 0.27$.
                The benefits of our new MSD elision strategy come to the fore with higher $m$.
                For $0.27 < m \leq 3.0$, our new implementation beats its LSD-first competitor in terms of solve time, while that presented in our previous work does not.

                Fig.~\ref{plt:qual_8} shows the results of the same experiments as performed for Fig.~\ref{plt:qual_32}, but compared against an 8-bit LSD-first arithmetic implementation (LSD-8) instead.
                Here, high $m$ results in ill-conditioned systems, for which LSD-8 is said to have \emph{under-budgeted} precision.
                When $m > 2$, only our arbitrary-precision solvers can converge to results of great-enough accuracy.
                In these cases, their performance speedups are effectively infinite.
                For $m \leq 2$, while both our new and prior implementations experience slowdowns versus LSD-8, the former is faster than the latter in all cases.
			
	\section{Conclusion \& Future Work}
		
		In this article, we presented a theorem allowing us to predict the rate of stable MSD growth across the approximants of any stationary iterative method using maximally redundant number representation.
		With knowledge that some number of MSDs are common to two successive approximants, our analysis allows us to declare when, and which, MSDs in all future approximants will stabilize.
		The recomputation of these digits can thus be avoided, facilitating performance speedups.
	    Unlike the E-method, this proposal holds, and is of benefit for, linear systems of any conditioning.
		
		We demonstrated efficiency over our previous work~\cite{li2018digit} and conventional (LSD-first) arithmetic implementations using a hardware implementation of our proposal for the Jacobi method.
		Against the former, we achieved speedups of 2.0--2.2$\times$ for the solution of a range of representative two-dimensional linear systems.
		Versus the latter, we demonstrated gains in cases where LSD-first solvers have precisions either too low or too high to suit the problems at hand.

        In the future, we will extend our analysis to more iterative methods, including gradient descent and Krylov subspace methods.
        We foresee that MSD-first stochastic gradient descent with digit stability declaration would be of particular interest to the deep learning community.
        We are also keen to adapt our proposal to Newton's method, for which we expect to achieve substantial performance gains due to its quadratic convergence.
        
	\section*{Acknowledgments}
	
	    The authors are grateful for the support of the United Kingdom EPSRC (grants EP/P010040/1 and EP/L016796/1), Imagination Technologies, the Royal Academy of Engineering, and the China Scholarship Council.
	    They also wish to thank Milos D. Ercegovac for his helpful suggestions.
	    
	    Supporting data for this article are available online at https://doi.org/10.5281/zenodo.3564471.

    % Balance last page
    \IEEEtriggeratref{7}
    
	\bibliographystyle{IEEEtran}
	\bibliography{ARITH}

\end{document}